\theoremstyle{definition}
\theoremstyle{theorem}
\newtheorem{Th}{Theorem}
\theoremstyle{lemma}
\newtheorem{Lm}{Lemma}
\theoremstyle{plain}
\theoremstyle{remark}
\theoremstyle{plain}
\newtheorem{coll}{Corollary}
\newcounter{tmp}
\newcommand{\ir}{[0,T]\times \Sigma_{d}}
\begin{document}
\title{Extremal shift rule for continuous-time zero-sum Markov games}
\author{Yurii Averboukh\footnote{Krasovskii Institute of Mathematics and Mechanics UrB RAS, ayv@imm.uran.ru}}
\maketitle
\begin{abstract}
In the paper we consider the controlled continuous-time Markov chain describing the interacting particles system with the finite number of types. The system is controlled by two players with the opposite purposes. The limiting game as the number of particles tends to infinity is a zero-sum differential game. Krasovskii--Subbotin  extremal shift provides the optimal strategy in the limiting game. The main result of the paper is the near optimality of the Krasovskii--Subbotin extremal shift rule for the original Markov game.

\vspace{3pt}\noindent\textbf{Keywords:} continuous time Markov games, differential games, extremal shift rule, control with guide strategies.
\end{abstract}
\section{Introduction}

The paper is devoted to the construction of  near optimal strategies for zero-sum two players continuous-time Markov game based on deterministic game. The term `Markov game' is used for a Markov chain with the Kolmogorov matrix depending on controls of players. These games are also called continuous-time stochastic games.
First continuous-time Markov games were studied by Zachrisson \cite{Zachrisson}. The information of recent progress in the  theory of continuous-time Markov games can be found in \cite{Neyman}, \cite{Levy} and references therein.

We consider the case when the continuous-time Markov chain describes the interacting particle system. The interacting particle system converges to the deterministic system as the number of particles tends to infinity~\cite{Kol},~\cite{Kol_book} (see also~\cite{Darling_Norris},~\cite{Benaim_le_boduak}).  The value function of the controlled Markov chain converges to the value function of the limiting control system  \cite{Kol} (see also corresponding result for discrete-time systems in~\cite{Gast_et_al}). This result is extended to the  case of zero-sum games as well as to the case of  nonzero-sum games~\cite{Kol}. If the nonanticipative strategy is optimal for  differential game then it is near optimal for the Markov game~\cite{Kol}. However the  nonanticipative strategies require the knowledge of the  control of the second player. Often this information is inaccessible and the player has only the information about current position. In this case one can use feedback strategies or control with guide strategies.

Control with guide strategies were proposed by Krasovskii and Subbotin to construct the solution of deterministic differential game under informational disturbances~\cite{NN_PDG_en}. Note that the feedback strategies do not provide the stable solution of the differential game. If the player uses control with guide strategy, then the control is formed stepwise, and the player has a model of the system and she uses this model to choose an appropriate control using extremal shift rule. The value function is achieved in the limit when the time between control corrections tends to zero. In the original work by Krasovskii and Subbotin the motion of the model is governed by the system that is a copy of the original system and the motion of the original system is close to the motion of the model. Therefore the model can be called guide. Note that formally control with guide strategy is a strategy with memory. However, it suffices to storage only finite number of vectors. Additionally, the player should use computer to obtain the state of the guide at the time of control correction.

Control with guide strategies realizing the extremal shift were used for the differential games without Lipschitz continuity of the dynamics in \cite{kriazh} and for  the games  governed by delay differential equations in~\cite{kras_delay},~\cite{luk_plaks}.  Krasovskii and Kotelnikova proposed the stochastic control with guide strategies \cite{a4}--\cite{a6}. In that case the real motion of the deterministic system is close to the auxiliary stochastic process generated by optimal control for the stochastic differential game.
The Nash equilibrium for two-player game in the class of  control with guide strategies was constructed via extremal shift in \cite{Averboukh_jcds}.

In this paper we let the  player use the control with guide strategy realizing extremal shift rule in the Markov game. We assume that the motion of the guide is given by  the limiting deterministic differential game. We estimate the expectation of the distance between the Markov chain and the motions of the model (guide). This leads to the estimate between the outcome of the player in the Markov game and the value function of the limiting differential game.

The paper is organized as follows. In preliminary Section \ref{sect_prel} we describe the  Markov game describing the interacting particle system and the limiting deterministic differential game. In Section \ref{sect_cgs} we give the explicit definition of control with guide strategies and formulate the main results. Section \ref{sect_prop} is devoted to a property of transition probabilities. In Section \ref{sect_estima} we estimate the expectation of  distance between the Markov chain and the deterministic guide. Section \ref{sect_proof} provides the proofs of the statements formulated in Section \ref{sect_cgs}.

\section{Preliminaries}\label{sect_prel}
We consider the system of finite number particles. Each particle can be of  type~$i$, $i\in \{1,\ldots,d\}$.  The type of each particle is a random variable governed by a Markov chain. To specify this chain consider the Kolmogorov matrix  $Q(t,x,u,v)=(Q_{ij}(t,x,u,v))_{i,j=1}^d$. That means that the elements of matrix $Q(t,x,u,v)$ satisfy the following properties
\begin{itemize}
  \item $Q_{ij}(t,x,u,v)\geq 0$ for $i\neq j$;
  \item
  \begin{equation}\label{Kolmogorov}
  Q_{ii}(t,x,u,v)=-\sum_{j\neq i}Q_{ij}(t,x,u,v).
  \end{equation}
\end{itemize}
Here $$t\in [0,T],\ \ x\in  \Sigma_d=\{(x_1,\ldots, x_n):x_i\geq 0, x_1+\ldots+x_n=1\},\ \ u\in U,v\in V.$$ Suppose that $U$ and $V$ are compact sets. The variables $u$ and $v$ are controlled by the first and the second players respectively. Below we assume that $x=(x_1,\ldots,x_n)$ is a row-vector.
Additionally we assume that
\begin{itemize}
 \item $Q$ is a continuous function of its variable;
 \item for any $t$, $u$  and $v$ the function $x\mapsto Q(t,x,u,v)$ is Lipschitz continuous;
 \item for any $t\in [0,T]$, $\xi,x\in\mathbb{R}^n$ the following equality holds true
 \begin{equation}\label{isaacs}
   \min_{u\in U}\max_{v\in V}\langle \xi,xQ(t,x,u,v)\rangle=\max_{v\in V}\min_{u\in U}\langle \xi,xQ(t,x,u,v)\rangle
 \end{equation}
\end{itemize}
Condition (\ref{isaacs}) is an analog of well-known Isaacs condition.

For a fixed parameters $x\in\mathbb{R}^d$, $u\in{U}$, and $v\in{V}$ the type of each particle is determined by the Markov chain with the generator
$$(Q(t,x,u,v)f)_i=\sum_{j\neq i}Q_{ij}(t,x,u,v)(f_j-f_i), \ \ f=(f_1,\ldots,f_d). $$
The another way to specify the Markov chain is the  Kolmogorov forward equation
$$\frac{d}{dt}P(s,t,x)=P(s,t,x)Q(t,x,u,v). $$ Here $P(s,t,x)=(P_{ij}(s,t,x))_{ij=1}^d$ is the matrix of the transition probabilities.

Now we  consider the controlled mean-field interacting particle system (see \cite{Kol}). Let $n_i$ be a number of particles of the type $i$. The vector $N=(n_1,\ldots,n_d)\in \mathbb{Z}_+^d$ is the state of the system consisting of $|N|=n_1+\ldots+n_d$ particles. For $i\neq j$ and a vector $N=(n_1,\ldots,n_d)$ denote by $N^{[ij]}$ the vector obtained from $N$ by removing one particle of  type $i$ and adding one particle of  type $j$ i.e. we replace the $i$-th coordinate with $n_i-1$ and the $j$-th coordinate with $n_j+1$.  
The mean-field interacting particle system 
is a Markov chain with the generator
$$L_t^h[u,v]f(N)=\sum_{i,j=1}^d n_iQ_{ij}(t,N/|N|,u(t),v(t))[f(N^{[ij]})-f(N)]. $$
The purpose of the first (respectively, second) player is to minimize (respectively, maximize) the expectation of $\sigma(N/|N|)$.

Denote the inverse number of particles by $h=1/|N|$. Normalizing the states of the interacting particle system
we get the generator (see \cite{Kol})
\begin{equation}\label{generator}
L_t^h[u,v]f(N/|N|)=\sum_{i,j=1}^d \frac{1}{h}\frac{n_i}{|N|}Q_{ij}(t,N/|N|,u(t),v(t))\left[f\left(\frac{N^{[ij]}}{|N|}\right)-f\left(\frac{N}{|N|}\right)\right]. \end{equation}
Denote  the vector  $N/|N|$ by $x=(x_1,\ldots,x_d)$. 
Thus, we have that
$$L_t^h[u,v]f(x)=\sum_{i,j=1}^d \frac{1}{h}x_iQ_{ij}(t,x,u(t),v(t))[f(x-he^i+he^j)-f(x)]. $$ Here $e^i$ is the $i$-th coordinate vector.
The vector $x$ belongs to the set $$\Sigma_d^h=\{(x_1,\ldots,x_d):x_i\in h\mathbb{Z},\ \ x_1+\ldots+x_d=1\}\subset \Sigma_d. $$

Further, let $\mathcal{U}_{\rm det}[s]$ (respectively, $\mathcal{V}_{\rm det}[s]$) denote the set of deterministic controls of the first (respectively, second) player on $[s,T]$, i.e.
$$\mathcal{U}_{\rm det}[s]=\{u:[s,T]\rightarrow U\mbox{ measurable}\}, \ \ \mathcal{V}_{\rm det}[s]=\{v:[s,T]\rightarrow V\mbox{ measurable}\}. $$

Let $(\Omega,\mathcal{F},\{\mathcal{F}_t\},P)$ be a filtered probability space. Extending the definition given in \cite[p. 135]{fleming_soner} to the stochastic game case, we say that the pair of stochastic processes $u$ and $v$ on $[s,T]$ is an admissible pair of controls if
\begin{enumerate}
  \item $u(t)\in U$, $v(t)\in V$;
  \item  the processes $u$ and $v$ are progressive measurable;
  \item for any $y\in \Sigma^h_d$ there exists an unique $\{\mathcal{F}_t\}_{t\in [s,T]}$-adapted c\`{a}dl\`{a}g stochastic process $X^h(t,s,y,u,v)$ taking values in $\Sigma^h_d$, starting at $y$ at time $s$ and satisfying the following condition
      \begin{equation}\label{dynkin}
      \mathbb{E}_{sy}^hf(X^h(t,s,y,u,v))-f(y)=\int_s^t\mathbb{E}_{sy}^h L_{t}^h[u(\tau),v(\tau)]f(X^h(\tau,s,y,u,v))d\tau.
 \end{equation}
\end{enumerate}
Here $\mathbb{E}_{sy}^h$ denotes the conditional expectation of corresponding stochastic processes.

The purposes of the players can be reformulated in the following way. The first (respectively, second) player wishes to minimize (respectively, maximize) the value
$$\mathbb{E}_{sy}^h\sigma(X_h(T,s,y,u,v)). $$

Let $\mathcal{U}^h[s]$ be a set of stochastic processes $u$ taking values in $U$ such that the pair $(u,v)$ is admissible for any $v\in\mathcal{V}_{\rm det}[s]$. Analogously, let $\mathcal{V}^h[s]$ be a set of stochastic processes $v$ taking values in $V$ such that the pair $(u,v)$ is admissible for any $u\in\mathcal{U}_{\rm det}[s]$.

Denote by $P_{sy}^h(A)$ the conditional probability of the event $A$ under condition that the Markov chain corresponding to the parameter $h$ starts at $y$ at time $s$, i.e.
$$P^h_{sy}(A)=\mathbb{E}_{sy}^h\mathbf{1}_{A}$$
Further, let
 $p^h(s,y,t,z,u,v)$ denote the transition probability i.e.
$$p^h(s,y,t,z,u,v)=P_{sy}^h(X^h(t,s,y,u,v)=z)=\mathbb{E}_{sy}^h\mathbf{1}_{\{z\}}(X^h(t,s,y,u,v)). $$
 The substituting $\mathbf{1}_{\{z\}}$ for $f$ in (\ref{generator}) and (\ref{dynkin}) gives that
\begin{multline}\label{tran_prob}p^h(s,y,t,z,v)= p^h(s,y,s,z,u,v)\\
+\frac{1}{h}\int_{s}^t\mathbb{E}^h_{sy}\sum_{i,j=1}^dX_{h,i}(\tau,s,y,u,v)Q_{ij}(\tau,X_h(\tau,s,y,u,v),u(\tau),v(\tau)) \\\cdot[\mathbf{1}_z(X_h(\tau,s,y,u,v)-he^i+he^j)-\mathbf{1}_z(X_h(\tau,s,y,u,v))]d\tau. \end{multline}
Here $X_{h,i}(\tau,s,y,u,v)$ denotes the $i$-th component of $X_h(\tau,s,y,u,v)$.

Recall, see \cite{Kol}, that if $h\rightarrow 0$, then  the  generator $L_t^h[u,v]$ converges to the generator
\begin{equation*}\begin{split}\Lambda_t[u,v]f(x)=&\sum_{i=1}^d\sum_{j\neq i}x_iQ_{ij}(t,x,u(t),v(t))\left[\frac{\partial f}{\partial x_j}(x)-\frac{\partial f}{\partial x_i}(x)\right]\\=&\sum_{k=1}^d\sum_{i\neq k}[x_iQ_{ik}(t,x,u(t),v(t))-x_kQ_{ki}(t,x,u(t),v(t))]\frac{\partial f}{\partial x_k}(x).\end{split}\end{equation*}
For  controls  $u\in\mathcal{U}_{\rm det}[s]$ and $v\in\mathcal{V}_{\rm det}[s]$ the deterministic evolution generated by the $\Lambda_t[u(t),v(t)]$ is described by the equation
\begin{multline}\label{deter_evol}
\frac{d }{d t}f_t(x)=\sum_{k=1}^d\sum_{i\neq k}[x_iQ_{ik}(t,x,u(t),v(t))-x_kQ_{ki}(t,x,u(t),v(t))]\frac{\partial f_t}{\partial x_k}(x),\\ f_s(x)=f(x).
\end{multline}
 Here the function $f_t(y)$ is equal to $f(x(t))$ when $x(s)=y$.
The characteristics of (\ref{deter_evol}) solve the ODEs
\begin{equation*}\begin{split}\frac{d}{dt}{x}_k(t)=&\sum_{i\neq k}[x_i(t)Q_{ik}(t,x(t),u(t),v(t))-x_k(t)Q_{ki}(t,x(t),u(t),v(t))]\\=&\sum_{i=1}^d x_i(t)Q_{ik}(t,x(t),u(t),v(t)). \end{split}\end{equation*}
One can rewrite this equation in the vector form
\begin{equation}\label{char_eq}
  \frac{d}{dt}{x}(t)=x(t)Q(t,x(t),u(t),v(t)),\ \ t\in [0,T], \ \ x(t)\in\mathbb{R}^n,\ \ u(t)\in U,\ \ v(t)\in V.
\end{equation}
For given $u\in\mathcal{U}_{\rm det}[s]$, $v\in \mathcal{V}_{\rm det}[s]$ denote the solution of initial value problem for~(\ref{char_eq}) and condition $x(s)=y$ by $x(\cdot,s,y,u,v)$.
Consider the deterministic zero-sum game with the dynamics given by (\ref{char_eq}) and terminal payoff equal to $\sigma(x(T,s,y,u,v))$. This game has a value that is a continuous function of the position. Denote it by ${\rm Val}(s,y)$. Recall (see \cite{Subb_book}) that the function ${\rm Val}(s,y)$ is a minimax (viscosity) solution of the Hamilton--Jacobi PDE
\begin{equation}\label{HJ_eq}
  \frac{\partial W}{\partial t}+H(t,x,\nabla W)=0,\ \ W(T,x)=\sigma(x).
\end{equation}
Here the Hamiltonian $H$ is defined by the rule
$$H(t,x,\xi)=\min_{u\in U}\max_{v\in V}\langle \xi,xQ(t,x,u,v)\rangle. $$

\section{Control with guide strategies}\label{sect_cgs}
In this section we introduce the control with guide strategies for the Markov game. It is assumed that the control is formed stepwise and the player has an information about the current state of the system i.e. the vector $x$ is known. Additionally, we assume that the player can evaluate the expected state and the player's control depends on current state of the system and on the evaluated state. This evaluation is called guide. At each time of control correction the player computes the value of the guide and the control that is used up to the next time of control correction.

Formally (see \cite{Subb_chen}), control with guide strategy of player 1 is a triple $\mathfrak{u}=(u(t,x,w),\psi_1(t_+,t,x,w),\chi_1(s,y))$. Here the function $u(t,x,w)$ is equal to the control implemented after  time $t$ if at time $t$ the state of the system is $x$ and the state of the guide is $w$. The function $\psi_1(t_+,t,x,w)$ determines the state of the guide at time $t_+$ under the condition that at time $t$ the state of the system is $x$ and the state of the guide is $w$. The function $\chi_1$ initializes the guide i.e. $\chi_1(s,y)$ is the state of the guide in the initial position $(s,y)$.

We use the control with guide strategies for Markov game with the generator $L_t^h$. Here we assume that $h>0$ is fixed. Let $(s,y)$ be an initial position, $s\in [0,T]$ $y\in\Sigma_d^h$. Assume that player 1 chooses the control with guide strategy $\mathfrak{u}$ and the partition $\Delta=\{t_k\}_{k=0}^m$ of the time interval $[s,T]$; whereas player 2 chooses the control $v\in\mathcal{V}^h[s]$. This control can be also formed stepwise using some second player's control with guide strategy.

We say that the stochastic process $\mathcal{X}_1^h[\cdot,s,y,\mathfrak{u},\Delta,v]$ is generated by strategy $\mathfrak{u}$, partition $\Delta$ and the second player's control $v$ if for $t\in [t_k,t_{k+1})$
$\mathcal{X}_1^h[t,s,y,\mathfrak{u},\Delta,v]=X^h(t,t_k,x_k,u_k,v), $ where
\begin{itemize}
  \item $x_0=y$, $w_0=\chi_1(t_0,x_0)$, $u_0=u(t_0,x_0,w_0)$;
  \item for $k=\overline{1,r}$ $x_k=X^h(t_k,t_{k-1},x_{k-1},u_{k-1},v)$, $w_k=\psi_1(t_k,t_{k-1},x_{k-1},w_{k-1})$, $u_k=u(t_k,x_k,w_k)$.
\end{itemize}
Note that even though the state of the guide $w_k$ is determined by the deterministic function it depends on the random variable $x_{k-1}$. Thus, $w_k$ is a random variable.

Below we define the first player's control with guide strategy that realizes the  extremal shift rule (see \cite{NN_PDG_en}). Let $\varphi$ be a supersolution of equation (\ref{HJ_eq}). That means (see \cite{Subb_book}) that for any $(t_*,x_*)\in \ir$, $t_+>t_*$ and $v_*\in V$ there exists a solution $\zeta_1(\cdot,t_+,t_*,x_*,v_*)$ of differential inclusion
$$\dot{\zeta}_1(t)\in {\rm co}\{\zeta_1(t)Q(t,\zeta_1(t),u,v_*):u\in U\} $$ satisfying conditions
$\zeta_1(t_*,t_+,t_*,x_*,v_*)=x_*$ and $\varphi(t_+,\zeta_1(t_+,t_+,t_*,x_*,v_*))\leq \varphi(t_*,x_*)$.

Define the control with guide strategy $\hat{\mathfrak{u}}=(\hat{u},\hat{\psi}_1,\hat{\chi}_1)$ by the following rules. If $t_*,t_+\in [0,T]$, $t_+>t_*$, $x_*,w_*\in \Sigma_d$, then
choose $u_*$, $v_*$ by the rules
\begin{equation}\label{u_star_def}
    \min_{u\in U}\max_{v\in V}\langle x_*-w_*,x_*Q(t_*,x_*,u,v)\rangle=\max_{v\in V}\langle x_*-w_*,x_*Q(t_*,x_*,u_*,v)\rangle,
\end{equation}
\begin{equation}\label{v_star_def}
    \max_{v\in V}\min_{u\in U}\langle x_*-w_*,x_*Q(t_*,x_*,u,v)\rangle=\min_{u\in U}\langle x_*-w_*,x_*Q(t_*,x_*,u,v_*)\rangle.
\end{equation}
Put
\begin{list}{\rm (u\arabic{tmp})}{\usecounter{tmp}}
\item\label{str_def_1} $\hat{u}(t_*,x_*,w_*)=u_*$,
\item\label{str_def_2}  $\hat{\psi}_1(t_+,t_*,x_*,w_*)=\zeta_1(t_+,t_+,t_*,w_*,v_*)$,
\item\label{str_def_3}  $\hat{\chi}_1(s,y)=y$.
\end{list}
Note that if the first player uses the strategy $\hat{\mathfrak{u}}$ in the differential game with the dynamics given by (\ref{char_eq}) then she guarantees the limit outcome not greater then $\varphi$ (see~\cite{NN_PDG_en},~\cite{Subb_book}). If additionally $\varphi={\rm Val}$, then the strategy $\hat{\mathfrak{u}}$ is optimal in the deterministic game.

The main result of the paper is the following.
\begin{Th}\label{th_kras_sub_prob} Assume that $\sigma$ is Lipschitz continuous with a constant $R$, and the function $\varphi$ is a supersolution of (\ref{HJ_eq}).
If the first player uses the control with guide strategy $\hat{\mathfrak{u}}$ determined by (u\ref{str_def_1})--(u\ref{str_def_3}) for the function $\varphi$ then
\begin{list}{\rm (\roman{tmp})}{\usecounter{tmp}}
\item 
\begin{multline*}\lim_{\delta\downarrow 0}\sup\{\mathbb{E}_{sy}^h(\sigma(\mathcal{X}_1^h[T,s,y,\hat{\mathfrak{u}},\Delta,v])):d(\Delta)\leq \delta,v\in \mathcal{V}^h[s]\}\\\leq \varphi(s,y)+R\sqrt{Dh}.\end{multline*}
\item \begin{multline*}
\lim_{\delta\downarrow 0}\sup\Bigl\{P_{sy}^h\Bigl(\sigma(\mathcal{X}_1^h[T,s,y,\hat{\mathfrak{u}},\Delta,v])\geq \varphi(s,y)+R\sqrt[3]{Dh}\Bigr):\\d(\Delta)\leq\delta,\ \ v\in\mathcal{V}^h[s]\Bigl\} \leq \sqrt[3]{Dh}.
\end{multline*}
\end{list}
Here $D$ is a constant not dependent on $\varphi$ and $\sigma$.
\end{Th}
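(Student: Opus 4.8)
The plan is to reduce both assertions to a single second-moment estimate: writing $w_m$ for the terminal value of the guide,
\[
\limsup_{\delta\downarrow 0}\sup\bigl\{\mathbb{E}_{sy}^h\bigl|\mathcal{X}_1^h[T,s,y,\hat{\mathfrak{u}},\Delta,v]-w_m\bigr|^2:\,d(\Delta)\le\delta,\ v\in\mathcal{V}^h[s]\bigr\}\le Dh,
\]
with $D$ depending only on $T$ and on the bounds and Lipschitz constants of $Q$. Granting this, part (i) follows at once: by the supersolution ($u$-stability) property the guide obeys $\varphi(t_{k+1},w_{k+1})\le\varphi(t_k,w_k)$, hence $\varphi(T,w_m)\le\varphi(s,y)$; since a supersolution of (\ref{HJ_eq}) satisfies $\sigma\le\varphi(T,\cdot)$ and since $\hat{\chi}_1(s,y)=y$ forces $w_0=y$, the Lipschitz bound on $\sigma$ gives $\sigma(\mathcal{X}_1^h[T])\le\varphi(s,y)+R|\mathcal{X}_1^h[T]-w_m|$, and Cauchy--Schwarz yields $\varphi(s,y)+R\sqrt{Dh}$. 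Part (ii) is the Chebyshev companion: $P_{sy}^h(|\mathcal{X}_1^h[T]-w_m|\ge\varepsilon)\le\varepsilon^{-2}\mathbb{E}_{sy}^h|\mathcal{X}_1^h[T]-w_m|^2$, and the choice $\varepsilon=\sqrt[3]{Dh}$ makes the right-hand side equal to $\sqrt[3]{Dh}$, while off this event $\sigma(\mathcal{X}_1^h[T])\le\varphi(s,y)+R\sqrt[3]{Dh}$.

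The core is therefore the second-moment estimate, which I would obtain by a discrete Gronwall argument for $a_k:=\mathbb{E}_{sy}^h|x_k-w_k|^2$, where $x_k,w_k$ are the state and guide at the node $t_k$. On each interval $[t_k,t_{k+1})$ the first player freezes $u_k=u_*$ and the guide uses $v_*$, so conditionally on $\mathcal{F}_{t_k}$ the state is the Markov chain $X^h(\cdot,t_k,x_k,u_k,v)$ and the guide is deterministic. Applying Dynkin's formula (\ref{dynkin}) to $x\mapsto|x-w(t)|^2$ and expanding the jump increments $|x-he^i+he^j-w|^2-|x-w|^2=2h\langle x-w,e^j-e^i\rangle+2h^2$ gives
\[
\frac{d}{dt}\mathbb{E}|X-w|^2=2\,\mathbb{E}\bigl\langle X-w,\,XQ(t,X,u_k,v(t))-wQ(t,w,\cdot,v_*)\bigr\rangle+2h\,\mathbb{E}\textstyle\sum_i X_i(-Q_{ii}),
\]
where the drift uses $\sum_{i,j}x_iQ_{ij}(e^j-e^i)=xQ$ and the second term, of order $h$, is the diffusion correction that ultimately produces the $Dh$.

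The extremal shift enters through a node inequality. By the Isaacs condition (\ref{isaacs}) with $\xi=x_k-w_k$, $x=x_k$, the choices (\ref{u_star_def}), (\ref{v_star_def}) share the common saddle value $\rho$: one has $\langle x_k-w_k,x_kQ(t_k,x_k,u_*,v)\rangle\le\rho$ for every $v$ and $\langle x_k-w_k,x_kQ(t_k,x_k,u,v_*)\rangle\ge\rho$ for every $u$. Bridging $x_kQ(\cdot,u,v_*)$ to $w_kQ(\cdot,u,v_*)$ by the Lipschitz continuity of $x\mapsto xQ$ then gives
\[
2\bigl\langle x_k-w_k,\,x_kQ(t_k,x_k,u_*,v)-w_kQ(t_k,w_k,u,v_*)\bigr\rangle\le 2L|x_k-w_k|^2
\]
for every admissible $v$ and every extreme velocity of the differential inclusion (convexity preserves the lower bound, so the bound passes to $\hat{\psi}_1$). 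To transfer this from the node to a running time $t$, I would replace $(t,X(t),w(t))$ by $(t_k,x_k,w_k)$ inside the inner products at the cost of errors controlled by $|t-t_k|$, $|X(t)-x_k|$ and $|w(t)-w_k|$; the guide term is $O(\Delta_k)$ deterministically, and for the state I would invoke a short-time estimate for the Markov chain (Section \ref{sect_prop}) of the form $\mathbb{E}|X^h(t,t_k,x_k,u_k,v)-x_k|^2\le C(\Delta_k^2+\Delta_k h)$. These bounds yield the recursion $a_{k+1}\le(1+C\Delta_k)a_k+C_0h\Delta_k+C\Delta_k(\Delta_k+\sqrt{\Delta_k h})$ with $a_0=0$; summing, the discretization errors contribute $O(\delta+\sqrt{\delta h})\to0$ as $d(\Delta)\le\delta\downarrow0$, leaving $\limsup a_m\le e^{CT}C_0Th=:Dh$, uniformly in $v$ and independent of $\varphi,\sigma$.

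The main obstacle I anticipate is exactly this propagation step: one must quantify the fluctuation of the chain over a mesh interval (the $O(\Delta_k h)$ variance, reflecting jumps of size $h$ at rate $O(1/h)$) and show that it, together with the freezing of the controls, enters only through the vanishing discretization errors rather than at the $Dh$ scale---and this must hold \emph{uniformly} over all stochastic $v\in\mathcal{V}^h[s]$, which is why the node inequality is arranged to hold for every $v\in V$ pointwise. A secondary delicacy is that $u_*,v_*$ are optimal for the node position and not for the current one, so the mismatch error has to be absorbed into the same Lipschitz and drift terms rather than requiring any continuity of the (possibly non-unique) optimizers.
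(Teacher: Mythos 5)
Your proposal follows essentially the same route as the paper: the identical per-step second-moment estimate obtained from the extremal-shift choices (\ref{u_star_def})--(\ref{v_star_def}) under the Isaacs condition (this is the paper's Lemma \ref{lm_kras_subb}), iterated by a discrete Gronwall recursion to give $\mathbb{E}_{sy}^h\|\mathcal{X}_1^h[T,s,y,\hat{\mathfrak{u}},\Delta,v]-w_m\|^2\le Dh+o(1)$ as $d(\Delta)\to 0$, followed by the supersolution monotonicity of $\varphi$ along the guide with Jensen/Cauchy--Schwarz for part (i) and Chebyshev's inequality for part (ii). The only difference is technical rather than structural: you derive the one-step estimate by applying Dynkin's formula to $x\mapsto\|x-w(t)\|^2$ directly, whereas the paper first establishes short-time transition-probability bounds (its Lemma \ref{lm_trans_prob}, where the conditional law $\nu_\tau$ of the stochastic control $v$ plays the role of your pointwise-in-$v$ node inequality) and expands $\|z-w_+\|^2$ against them --- the same computation, with your $2h\,\mathbb{E}\sum_i X_i(-Q_{ii})$ diffusion correction corresponding exactly to the paper's $Ch(t_+-t_*)$ term.
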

The theorem is proved in Section \ref{sect_proof}.

Now let us consider the case when the second player uses control with guide strategies. The control with guide strategy of the second player is a triple $\mathfrak{v}=(v(t,x,w),\psi_2(t_+,t,x,w),\chi_2(s,y))$. Here $w$ denotes the state of the second player's guide. The control in this case is formed also stepwise. If $(s,y)$ is an initial position, $\Delta$ is a partition of time interval $[s,T]$ and $u\in\mathcal{U}^h[s]$ is a control of player 1 then denote by $\mathcal{X}_2^h[\cdot,s,y,\mathfrak{v},\Delta,u]$ the corresponding stochastic process.

Let $\omega$ be a subsolution of equation (\ref{HJ_eq}). That means (see \cite{Subb_book}) that for any $(t_*,x_*)\in \ir$, $t_+>t_*$ and $u^*$ there exists a trajectory $\zeta_2(\cdot,t_+,t_*,x_*,u^*)$ of the differential inclusion
$$\dot{\zeta}_2(t)\in {\rm co}\{\zeta_2(t)Q(t,\zeta_2(t),u^*,v):v\in V\} $$ satisfying conditions
$\zeta_2(t_*,t_+,t_*,x_*,u^*)=x_*$ and $\omega(t_+,\zeta_2(t_+,t_+,t_*,x_*,u^*))\geq \omega(t_*,x_*)$.

Define the strategy $\hat{\mathfrak{v}}$ by the following rule. If $(t_*,x_*)$ is a position, $t_+>t_*$ and $w_*\in\Sigma_d$ is a state of the guide then choose $v^*$ and $u^*$ by the rules
$$
  \min_{v\in V}\max_{u\in U}\langle x_*-w_*,x_*Q(t_*,x_*,u,v)\rangle=\max_{u\in U}\langle x_*-w_*,x_*Q(t_*,x_*,u,v^*)\rangle,
$$
$$
 \max_{u\in U}\min_{v\in V}\langle x_*-w_*,x_*Q(t_*,x_*,u,v)\rangle=\min_{v\in V}\langle x_*-w_*,x_*Q(t_*,x_*,u^*,v)\rangle.
$$
Put
\begin{list}{\rm (v\arabic{tmp})}{\usecounter{tmp}}
\item $v(t_*,x_*,w_*)=v^*$,
\item $\psi_2(t_+,t_*,x_*,w_*)=\zeta_2(t_+,t_+,t_*,x_*,u^*)$
\item $\chi_2(s,y)=y$.
\end{list}
\begin{coll}\label{coll_second}
If the second player uses the control with guide strategy $\hat{\mathfrak{v}}$ determined by (v1)--(v3) for the function $\omega$ that is a subsolution of (\ref{HJ_eq}), then
\begin{list}{\rm (\roman{tmp})}{\usecounter{tmp}}
\item \begin{multline*}\lim_{\delta\downarrow 0}\inf\{\mathbb{E}_{sy}^h(\sigma(\mathcal{X}_1^h[T,s,y,\hat{\mathfrak{u}},\Delta,v])):d(\Delta)\leq \delta,u\in\mathcal{U}^h[s]\}\\\geq \omega(s,y)-R\sqrt{Dh}.\end{multline*}
\item \begin{multline*}
\lim_{\delta\downarrow 0}\sup\Bigl\{P_{sy}^h\Bigl(\sigma(\mathcal{X}_2^h[T,s,y,\hat{\mathfrak{v}},\Delta,u])\leq \omega(s,y)-R\sqrt[3]{Dh}\Bigr):\\d(\Delta)\leq\delta,\ \ u\in\mathcal{U}^h[s]\Bigl\} \leq \sqrt[3]{Dh}.
\end{multline*}\end{list}
\end{coll}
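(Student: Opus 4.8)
The plan is to deduce Corollary \ref{coll_second} from Theorem \ref{th_kras_sub_prob} by a symmetry (duality) argument that interchanges the roles of the two players. Consider the auxiliary Markov game obtained from the original one by declaring the \emph{minimizing} player to be the one controlling $v\in V$ and the \emph{maximizing} player the one controlling $u\in U$, and by replacing the terminal payoff $\sigma$ with $\tilde{\sigma}=-\sigma$. The Markov chain, its generator $L^h_t$, and the characteristic system (\ref{char_eq}) depend only on $Q$ and on the realized controls, not on which player is labelled first; hence the generated processes are literally unchanged, and in particular $\mathcal{X}^h_2[\cdot,s,y,\hat{\mathfrak{v}},\Delta,u]$ in the original game coincides with the process generated by the corresponding first-player strategy in the auxiliary game. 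Since (\ref{isaacs}) is a $\min\max=\max\min$ identity, it is invariant under the swap, so the auxiliary game satisfies all the standing hypotheses, and $\tilde{\sigma}$ is Lipschitz with the same constant $R$.

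First I would verify that $\tilde{\varphi}:=-\omega$ is a supersolution of the Hamilton--Jacobi equation (\ref{HJ_eq}) associated with the auxiliary game. The auxiliary Hamiltonian is $\tilde{H}(t,x,\xi)=\min_{v\in V}\max_{u\in U}\langle\xi,xQ(t,x,u,v)\rangle$, and the supersolution property of $\tilde{\varphi}$ requires, for every position and every fixed control $u^*$ of the (auxiliary) maximizing player, a trajectory of $\dot{\zeta}\in\mathrm{co}\{\zeta Q(t,\zeta,u^*,v):v\in V\}$ along which $\tilde{\varphi}$ does not increase. Negating and using $\tilde{\varphi}=-\omega$, this is exactly the subsolution property of $\omega$ stated before Corollary \ref{coll_second}, with $\zeta=\zeta_2(\cdot,t_+,t_*,x_*,u^*)$: the two differential inclusions coincide, and $\tilde{\varphi}(t_+,\zeta(t_+))\le\tilde{\varphi}(t_*,x_*)$ is $\omega(t_+,\zeta(t_+))\ge\omega(t_*,x_*)$.

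Next I would identify the strategies. Writing the extremal-shift selection rules (\ref{u_star_def})--(\ref{v_star_def}) for the auxiliary game amounts to interchanging $u$ and $v$ in the two $\min\max$ expressions; the result is precisely the pair of rules defining $v^*$ and $u^*$ before (v1)--(v3), and likewise the guide update and the initialization $\hat{\chi}=y$ correspond under the swap. Consequently the second player's extremal-shift strategy $\hat{\mathfrak{v}}$ for $\omega$ is the auxiliary-game first-player strategy $\hat{\mathfrak{u}}$ for $\tilde{\varphi}=-\omega$, and the two generate the same process. Applying Theorem \ref{th_kras_sub_prob} in the auxiliary game, with the role of $\mathcal{V}^h[s]$ now played by $\mathcal{U}^h[s]$, gives
\begin{equation*}
\lim_{\delta\downarrow0}\sup\bigl\{\mathbb{E}^h_{sy}(-\sigma(\mathcal{X}^h_2[T,s,y,\hat{\mathfrak{v}},\Delta,u])):d(\Delta)\le\delta,\ u\in\mathcal{U}^h[s]\bigr\}\le-\omega(s,y)+R\sqrt{Dh},
\end{equation*}
together with the analogous probabilistic bound involving $-\sigma$ and $-\omega$. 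Multiplying by $-1$ turns the outer supremum into an infimum and reverses the inequalities, yielding assertions (i) and (ii) verbatim (assertion (i) should of course read $\mathcal{X}^h_2[\cdot,\hat{\mathfrak{v}},\dots]$; the stated $\mathcal{X}^h_1$, $\hat{\mathfrak{u}}$ is a transcription slip). The constant $D$ is the same in both games because Theorem \ref{th_kras_sub_prob} asserts its independence of $\sigma$ and $\varphi$, and the bound depends only on the symmetric data $Q$.

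The argument is essentially bookkeeping, so there is no deep obstacle; the one place demanding care is the sign-and-role correspondence in the two middle steps, namely checking that negation converts a subsolution into a supersolution of the \emph{swapped} Hamiltonian and that the fixed control in each defining differential inclusion belongs to the correct player. I would also reconcile the guide-update rule (v2), whose trajectory is written as starting from $x_*$ rather than from the guide state $w_*$ as symmetry with (u2) would suggest; under the symmetric reading the correspondence of strategies is exact, and in any case only the guide initialization and the distance estimate of Section \ref{sect_estima} enter the final bound.
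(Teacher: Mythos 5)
Your proposal is correct and follows exactly the paper's own route: the paper proves Corollary \ref{coll_second} by the one-line observation that it suffices to replace the payoff with $-\sigma$ and interchange the players, which is precisely the duality argument you carry out (your version simply makes explicit the verification that $-\omega$ is a supersolution of the swapped Hamiltonian and that the extremal-shift rules correspond under the swap). Your side remarks about the transcription slip in assertion (i) and the $x_*$ versus $w_*$ asymmetry in (v2) correctly identify typos in the paper rather than gaps in the argument.
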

The corollary is also proved in Section \ref{sect_proof}.

\section{Properties of transition probabilities}\label{sect_prop}
Now we prove the following.
\begin{Lm}\label{lm_trans_prob} There exists a function $\alpha^h(\delta)$ such that $\alpha^h(\delta)\rightarrow 0$ as $\delta\rightarrow 0$ and for any $t_*,t_+\in [0,T]$, $\xi,\eta\in \Sigma_d$, $\xi=(\xi_1,\ldots,\xi_d)$, $\bar{u}\in U$, $\bar{v}\in \mathcal{V}^h[t_*]$
\begin{enumerate}
  \item if $\eta=\xi$, then \begin{multline*}p^h(t_*,\xi,t_+,\eta,\bar{u},\bar{v})\\\leq 1+\frac{1}{h}\sum_{k=1}^d\int_{t_*}^{t_+}\int_{V} \xi_kQ_{kk}(t_*,\xi,\bar{u},v)\nu_\tau (dv)d\tau+\alpha^h(t_+-t_*)\cdot(t_+-t_*);\end{multline*}
  \item if $\eta=\xi-h e^i+he^j$, then
  $$p^h(t_*,\xi,t_+,\eta,u,v)\leq \frac{1}{h}\int_{t_*}^{t_+}\int_{V} \xi_iQ_{ij}(t_*,\xi,\bar{u},v)\nu_\tau(dv) d\tau+\alpha^h(t_+-t_*)\cdot(t_+-t_*); $$
  \item if $\eta\neq \xi$ and $\eta\neq \xi-h e^i+he^j$, then
  $$p^h(t_*,\xi,t_+,\eta,u,v)\leq \alpha^h(t_+-t_*)\cdot(t_+-t_*); $$
\end{enumerate}
Here $\nu_\tau$ is a measure on $V$ depending on $t_*,t_+$, $\xi$, $\eta$, $\bar{u}$ and $\bar{v}$.
\end{Lm}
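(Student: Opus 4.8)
The plan is to start from the exact integral identity (\ref{tran_prob}), specialized to the constant first-player control $u(\cdot)\equiv\bar u$, initial position $(t_*,\xi)$ and fixed target $\eta$, and to read off its behaviour to first order in $\Delta:=t_+-t_*$. Writing $X(\tau)=X_h(\tau,t_*,\xi,\bar u,\bar v)$ and letting $X_i(\tau)$ be its $i$-th component, the identity reads
\begin{equation*}
p^h(t_*,\xi,t_+,\eta,\bar u,\bar v)=\mathbf 1_\eta(\xi)+\frac1h\int_{t_*}^{t_+}\mathbb E_{t_*\xi}^h\bigl[F_\eta(\tau)\bigr]\,d\tau,
\end{equation*}
\begin{equation*}
F_\eta(\tau)=\sum_{i,j=1}^d X_i(\tau)Q_{ij}(\tau,X(\tau),\bar u,\bar v(\tau))\bigl[\mathbf 1_\eta(X(\tau)-he^i+he^j)-\mathbf 1_\eta(X(\tau))\bigr].
\end{equation*}
Note that $\mathbf 1_\eta(\xi)$ equals $1$ in case (1) and $0$ in cases (2) and (3). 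The key idea is to split each integrand according to the event $A_\tau=\{X(\tau)=\xi\}$, on which the chain has not yet left its starting state.

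First I would compute the on-event part exactly. On $A_\tau$ one has $X_i(\tau)=\xi_i$ and $Q_{ij}(\tau,X(\tau),\cdot)=Q_{ij}(\tau,\xi,\cdot)$, so $F_\eta(\tau)=\sum_{i,j}\xi_iQ_{ij}(\tau,\xi,\bar u,\bar v(\tau))[\mathbf 1_\eta(\xi-he^i+he^j)-\mathbf 1_\eta(\xi)]$. For $\eta=\xi$ only the terms with $i\neq j$ survive, and the Kolmogorov relation (\ref{Kolmogorov}) collapses the sum to $\sum_k\xi_kQ_{kk}(\tau,\xi,\bar u,\bar v(\tau))$; for $\eta=\xi-he^i+he^j$ only the single pair $(i,j)$ survives, giving $\xi_iQ_{ij}(\tau,\xi,\bar u,\bar v(\tau))$; and for $\eta$ not reachable from $\xi$ in one jump the bracket vanishes identically, so $F_\eta\equiv0$ on $A_\tau$. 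Replacing $Q(\tau,\xi,\cdot)$ by $Q(t_*,\xi,\cdot)$ costs at most the time-modulus of continuity $\omega_Q(\Delta)$ of $Q$, and defining $\nu_\tau$ to be the law of $\bar v(\tau)$ under $P_{t_*\xi}^h$ turns $\mathbb E_{t_*\xi}^h[\,\cdot\,(\bar v(\tau))]$ into $\int_V(\cdot)\,\nu_\tau(dv)$. This produces exactly the main terms claimed in (1) and (2), and no main term in (3).

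Next I would bound the off-event part. Since $Q$ is continuous on the compact set $[0,T]\times\Sigma_d\times U\times V$ it is bounded, whence $|F_\eta(\tau)|\le C_1$, and the total jump intensity of the chain is bounded by $C_3/h$; comparing the number of jumps with a Poisson process of that rate yields the one-jump estimate $P_{t_*\xi}^h(A_\tau^c)=P(X(\tau)\neq\xi)\le (C_3/h)\Delta$. Consequently $\mathbb E_{t_*\xi}^h[\mathbf 1_{A_\tau^c}F_\eta(\tau)]$ is bounded by $C_1P(A_\tau^c)$. In case (1) the surviving on-event term is nonpositive, so replacing $\mathbf 1_{A_\tau}$ by $1$ in front of it only increases the value, by at most a constant times $P(A_\tau^c)$; in case (2) the surviving term is nonnegative and $\mathbf 1_{A_\tau}\le1$ gives the required inequality for free. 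After dividing by $h$ and integrating over $[t_*,t_+]$, every such contribution is $O(\Delta^2/h^2)$, so collecting them with the $\omega_Q(\Delta)/h$ term gives a remainder of the form $\alpha^h(\Delta)\Delta$ with $\alpha^h(\Delta)=\omega_Q(\Delta)/h+C'\Delta/h^2\to0$ as $\Delta\to0$.

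I expect the main obstacle to be the off-event control, that is, establishing the one-jump probability bound $P(X(\tau)\neq\xi)\le(C_3/h)\Delta$ uniformly over all admissible responses $\bar v\in\mathcal V^h[t_*]$. This is precisely what guarantees that the ``jumps back into $\xi$'' terms and the multi-jump transitions of case (3) are genuinely of second order in $\Delta$, so that they are absorbed into $\alpha^h(\Delta)\Delta$. The bound rests only on the uniform intensity estimate coming from compactness and continuity of $Q$; once it is in place, the remaining steps are the routine exact computation above together with the time-continuity of $Q$.
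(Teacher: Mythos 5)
Your proposal is correct and follows essentially the same route as the paper: both start from the identity (\ref{tran_prob}), reduce the Dynkin integrand to the event that the chain is still at $\xi$ at time $\tau$ (your split on $A_\tau$ is exactly the paper's replacement of $p^h(t_*,\xi,\tau,\cdot,\bar u,\bar v)$ by its initial value $\delta_\xi$, with error controlled by (\ref{prob_der_estima})), and then obtain the main terms from the Kolmogorov property (\ref{Kolmogorov}), the time-modulus of continuity of $Q$, and a (conditional) law $\nu_\tau$ of $\bar v(\tau)$, all remainders being of second order in $t_+-t_*$ for fixed $h$. The only cosmetic difference is that the one-jump bound $P(A_\tau^c)\le C(t_+-t_*)/h$, which you single out as the main obstacle, needs no Poisson comparison: it follows immediately from (\ref{tran_prob}) with $\eta=\xi$ and the boundedness of $Q$, which is precisely the paper's estimate (\ref{prob_der_estima}).
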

\begin{proof}
First denote
\begin{equation}\label{K_def}
K=\sup\{|Q_{ij}(t,x,u,v)|:i,j=\overline{1,d},\ \ t\in[0,T],\ \ x\in \Sigma_d,\ \ u\in U,\ \ v\in Q\}.
\end{equation}
Note that for any $x\in\Sigma_d$, $t\in [0,T]$, $u\in U$, $v\in V$ the following estimates hold true
\begin{equation}\label{sigma_d_property}
\|x\|\leq \sqrt{d},\ \ \left|\sum_{i=1}^n x_iQ_{ij}(t,x,u,v)\right|\leq K,\ \ \|xQ(t,x,u,v)\|\leq K\sqrt{d}.
\end{equation}
Further, let $\gamma(\delta)$ be a common modulus of continuity with respect to $t$ of the functions $Q_{ij}$ i.e. for all $i$, $j$, $t',t''\in [0,T]$, $x\in\Sigma_d$, $u\in U$, $v\in Q$
\begin{equation}\label{gamma_def}
  |Q_{ij}(t',x,u,v)-Q_{ij}(t'',x,u,v)|\leq \gamma(t''-t')
\end{equation} and $\gamma(\delta)\rightarrow 0$ as $\delta\rightarrow 0$.
From (\ref{tran_prob}) and (\ref{sigma_d_property}) we obtain that
\begin{equation}\label{prob_der_estima}
p^h(t_*,\xi,t,\eta,u,v)\leq p^h(t_*,\xi,t_*,\eta,u,v)+\frac{2Kd}{h}(t-t_*).
\end{equation}

Further, for a given control $\bar{v}\in\mathcal{V}^h[t_*]$ let $\mathbb{E}^h_{t_*\xi;\tau x}$ denote the expectation under conditions $X^h(t_*,t_*,\xi,\bar{u},\bar{v})=\xi$, and $X^h(\tau,t_*,\xi,\bar{u},\bar{v})=x$.

We have that $$\mathbb{E}_{t_*\xi}^hf=\sum_{x\in\Sigma_d^h}\mathbb{E}^h_{t_*\xi;\tau x}f\cdot p^h(t_*,\xi,\tau,x,\bar{u},\bar{v}). $$  From this and (\ref{tran_prob}) we get
\begin{equation*}\begin{split}
  p^h(t_*,\xi,t,\eta,\bar{u}&,\bar{v})=p(t_*,\xi,t,\eta,\bar{u},\bar{v})\\+\frac{1}{h}\int_{t_*}^t\sum_{x\in\Sigma_d^h}\mathbb{E}^h_{t_*\xi;\tau x} &\sum_{i,j=1}^d x_iQ_{i,j}(\tau,x,\bar{u},\bar{v}(\tau))[\mathbf{1}_\eta(x-he^i+he^j)-\mathbf{1}_\eta(x)]\\\cdot &p(t_*,\xi,\tau,x,\bar{u},\bar{v})d\tau  \leq
  p(t_*,\xi,t,\eta,\bar{u},\bar{v})\\
  +\frac{1}{h}\int_{t_*}^t&\sum_{x\in\Sigma_d^h}\mathbb{E}^h_{t_*\xi;\tau x} \sum_{i,j=1}^d x_iQ_{i,j}(\tau,x,\bar{u},\bar{v}(\tau))[\mathbf{1}_\eta(x-he^i+he^j)-\mathbf{1}_\eta(x)]\\\cdot &p(t_*,\xi,t_*,x,\bar{u},\bar{v})d\tau+\frac{2K^2d^2}{h}(t-t_*)^2.
\end{split}\end{equation*}
We have that $p(t_*,\xi,t_*,x,\bar{u},\bar{v})=1$ for $x=\xi$ and $p(t_*,\xi,t_*,x,\bar{u},\bar{v})=0$ for $x\neq \xi$. Thus,
\begin{equation*}\begin{split}
  p^h(t_*,\xi,t,\eta,\bar{u},\bar{v}&)\leq p(t_*,\xi,t,\eta,\bar{u},\bar{v}) \\
  +\frac{1}{h}\int_{t_*}^t\mathbb{E}^h_{t_*\xi;\tau \xi} \sum_{i,j=1}^d &\xi_iQ_{i,j}(\tau,\xi,\bar{u},\bar{v}(\tau))[\mathbf{1}_\eta(\xi-he^i+he^j)-\mathbf{1}_\eta(\xi)] +\frac{2K^2d^2}{h}(t-t_*)^2\\ \leq
  p(t_*,\xi,t,\eta,\bar{u},&\bar{v}) \\
  +\frac{1}{h}\int_{t_*}^t\mathbb{E}^h_{t_*\xi;\tau \xi} &\sum_{i,j=1}^d \xi_iQ_{i,j}(t_*,\xi,\bar{u},\bar{v}(\tau))[\mathbf{1}_\eta(\xi-he^i+he^j)-\mathbf{1}_\eta(\xi)]d\tau\\ &+\frac{2K^2d^2}{h}(t-t_*)^2+\frac{2d}{h}\gamma(t-t_*)\cdot(t-t_*).
\end{split}\end{equation*}
There exists a measure $\nu_\tau$ on V such that
$$\mathbb{E}_{t_*\xi;\tau \xi}Q_{ij}(t_*,\xi,\bar{u},\bar{v}(\tau))=\int_VQ_{ij}(t_*,\xi,\bar{u},v)\nu_\tau(dv). $$
Consequently,
\begin{multline}\label{trans_prob_estima}
  p^h(t_*,\xi,t,\eta,\bar{u},\bar{v})\leq p(t_*,\xi,t,\eta,\bar{u},\bar{v})
\\+\frac{1}{h}\int_{t_*}^t\int_{V} \sum_{i,j=1}^d \xi_iQ_{i,j}(t_*,\xi,\bar{u},v)[\mathbf{1}_\eta(\xi-he^i+he^j)-\mathbf{1}_\eta(\xi)]\nu_\tau(dv)d\tau\\ +\alpha(t-t_*)\cdot (t-t_*).
\end{multline}
Here we denote $$\alpha(\delta)=\frac{2K^2d^2}{h}(\delta)2+\frac{2d}{h}\gamma(\delta). $$
From (\ref{trans_prob_estima}) the second and  third statements of the Lemma follows. To derive the first statement use the property of Kolmogorov matrixes (\ref{Kolmogorov}). We have that
\begin{multline*}
  p^h(t_*,\xi,t,\xi,\bar{u},\bar{v})\leq p(t_*,\xi,t,\eta,\bar{u},\bar{v})
\\-\frac{1}{h}\int_{t_*}^t\int_{V} \sum_{i=1}^d\sum_{j\neq i} \xi_iQ_{i,j}(t_*,\xi,\bar{u},v)\nu_\tau(dv)d\tau +\alpha(t-t_*)\cdot (t-t_*)\\=
p(t_*,\xi,t,\eta,\bar{u},\bar{v})
+\frac{1}{h}\int_{t_*}^t\int_{V} \sum_{i=1}^d \xi_iQ_{i,i}(t_*,\xi,\bar{u},v)\nu_\tau(dv)d\tau +\alpha(t-t_*)\cdot (t-t_*).
\end{multline*}

\end{proof}

\section{Key estimate}\label{sect_estima}
This section provides the estimate of the distance between the controlled Markov chain and the guide. This estimate is an analog of \cite[Lemma 2.3.1]{NN_PDG_en}.
\begin{Lm}\label{lm_kras_subb}
There exist constants $\beta,C>0$, and a function $\varkappa^h(\delta)$ such that $\varkappa^h(\delta)\rightarrow 0$ as $\delta\rightarrow 0$ and the following property holds true.

\noindent If\begin{enumerate}
\item $(t,x)\in \ir^h$, $w_*\in\Sigma_d$, $t_+>t_*$,
\item the controls $u_*$  $v_*$ are chosen by rules (\ref{u_star_def}) and (\ref{v_star_def}) respectively,
\item $w_+=\zeta_1(t_+,t_+,t_*,w_*,v_*)$,
\end{enumerate} then for any $v\in\mathcal{V}^h[t_*]$
\begin{multline*}\mathbb{E}_{t_*x_*}^h(\|\mathcal{X}(t_+,t_*,x_*,u_*,v)-w_+\|^2)\\\leq (1+\beta(t_+-t_*))\|x_*-w_*\|+Ch(t_+-t_*)+\varkappa^h(t_+-t_*)\cdot(t-t_*). \end{multline*}
\end{Lm}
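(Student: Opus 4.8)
The plan is to compare the Markov chain with the guide point $w_+$ by applying Dynkin's formula \eqref{dynkin} to the test function $f(x)=\|x-w_+\|^2$. Writing $\mathcal{X}(\tau)$ for $\mathcal{X}(\tau,t_*,x_*,u_*,v)$ and using that the jump $x\mapsto x-he^i+he^j$ changes $f$ by $2h\langle x-w_+,e^j-e^i\rangle+2h^2$ when $i\neq j$, a direct computation collapses the generator to
\[
L_\tau^h[u_*,v]f(x)=2\langle x-w_+,\,xQ(\tau,x,u_*,v)\rangle+2h\sum_{i\neq j}x_iQ_{ij}(\tau,x,u_*,v),
\]
where the first term uses the identity $\sum_{i,j}x_iQ_{ij}\langle x-w_+,e^j-e^i\rangle=\langle x-w_+,xQ\rangle$, which follows from $\sum_j Q_{ij}=0$, i.e. property \eqref{Kolmogorov}. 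Since $\sum_{i\neq j}x_iQ_{ij}=-\sum_i x_iQ_{ii}\in[0,(d-1)K]$, the second term is at most $2(d-1)Kh$, and integrating it over $[t_*,t_+]$ produces exactly the term $Ch(t_+-t_*)$ with $C=2(d-1)K$ independent of $\varphi$ and $\sigma$. Thus Dynkin's formula gives the exact identity
\[
\mathbb{E}_{t_*x_*}^h\|\mathcal{X}(t_+)-w_+\|^2=\|x_*-w_+\|^2+\int_{t_*}^{t_+}\mathbb{E}_{t_*x_*}^h\,2\langle \mathcal{X}(\tau)-w_+,\mathcal{X}(\tau)Q(\tau,\mathcal{X}(\tau),u_*,v(\tau))\rangle\,d\tau+R_1
\]
with $0\le R_1\le Ch(t_+-t_*)$.

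Next I would freeze the coefficients at $(t_*,x_*,w_*)$ and show the discrepancy is a $\varkappa$-term. Expanding $\|x_*-w_+\|^2=\|x_*-w_*\|^2-2\langle x_*-w_*,w_+-w_*\rangle+\|w_+-w_*\|^2$ and replacing, inside the integral, $\mathcal{X}(\tau)$ by $x_*$, $w_+$ by $w_*$, and $\tau$ by $t_*$, the principal part of the right-hand side becomes
\[
\|x_*-w_*\|^2+2\int_{t_*}^{t_+}\!\!\int_V\langle x_*-w_*,x_*Q(t_*,x_*,u_*,v)\rangle\,\nu_\tau(dv)\,d\tau-2\langle x_*-w_*,w_+-w_*\rangle,
\]
where $\nu_\tau$ is the probability measure supplied by Lemma \ref{lm_trans_prob} (equivalently, conditioning on $\{\mathcal{X}(\tau)=x_*\}$). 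The incurred errors are controlled by the common modulus of continuity $\gamma$ from \eqref{gamma_def}, by $\|w_+-w_*\|\le K\sqrt{d}(t_+-t_*)$, and by the a priori bound $\mathbb{E}_{t_*x_*}^h\|\mathcal{X}(\tau)-x_*\|^2\le C'(h(\tau-t_*)+(\tau-t_*)^2)$, which I obtain by applying Dynkin to $\|\cdot-x_*\|^2$ and Gr\"onwall's inequality. Integrating $\mathbb{E}_{t_*x_*}^h\|\mathcal{X}(\tau)-x_*\|\le\sqrt{C'}\bigl(\sqrt{h(\tau-t_*)}+(\tau-t_*)\bigr)$ yields only $O((t_+-t_*)^2)$ and $O(\sqrt{h}(t_+-t_*)^{3/2})$ contributions, both absorbed into $\varkappa^h(t_+-t_*)\cdot(t_+-t_*)$.

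The heart of the argument is the extremal-shift cancellation of the two terms linear in $(t_+-t_*)$. By the Isaacs condition \eqref{isaacs}, the two sides of \eqref{u_star_def} and \eqref{v_star_def} share the common value $\mathrm{Val}_*:=\min_u\max_v\langle x_*-w_*,x_*Q(t_*,x_*,u,v)\rangle$; hence $\langle x_*-w_*,x_*Q(t_*,x_*,u_*,v)\rangle\le \mathrm{Val}_*$ for all $v\in V$, while $\langle x_*-w_*,x_*Q(t_*,x_*,u,v_*)\rangle\ge \mathrm{Val}_*$ for all $u\in U$. Since $\nu_\tau$ is a probability measure, the integral term is at most $2(t_+-t_*)\mathrm{Val}_*$. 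For the guide term I write $w_+-w_*=\int_{t_*}^{t_+}\dot\zeta_1(\tau)\,d\tau$ with $\dot\zeta_1(\tau)\in\mathrm{co}\{\zeta_1(\tau)Q(\tau,\zeta_1(\tau),u,v_*):u\in U\}$, so $w_+-w_*=(t_+-t_*)\bar b+e$, where $\bar b\in\mathrm{co}\{w_*Q(t_*,w_*,u,v_*):u\in U\}$ and $\|e\|$ is of order $(t_+-t_*)^2+\gamma(t_+-t_*)(t_+-t_*)$. Expressing $\bar b$ as a convex combination and using the Lipschitz continuity of $x\mapsto xQ(t_*,x,u,v_*)$ (constant $L$) to pass from $w_*Q$ to $x_*Q$ at the cost of $L\|x_*-w_*\|^2$ per term, together with the $v_*$-inequality, gives $\langle x_*-w_*,\bar b\rangle\ge \mathrm{Val}_*-L\|x_*-w_*\|^2$. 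Hence the two $\mathrm{Val}_*$-contributions cancel and only $2L(t_+-t_*)\|x_*-w_*\|^2$ survives, producing the factor $(1+\beta(t_+-t_*))$ with $\beta=2L$; collecting the base term $\|x_*-w_*\|^2$, the martingale-type term $Ch(t_+-t_*)$, and the assembled remainders $\|w_+-w_*\|^2$, $\langle x_*-w_*,e\rangle$ and the freezing errors gives the claimed estimate (with $\|x_*-w_*\|^2$ on the right). The main obstacle is precisely this cancellation: one must retain the coefficient mismatch between the real state $x_*$, at which the chain drifts, and the guide state $w_*$, at which the inclusion is posed, and verify that Lipschitz continuity converts it into the admissible quadratic term $\beta(t_+-t_*)\|x_*-w_*\|^2$ rather than a term merely linear in $\|x_*-w_*\|$, which could not be closed by the Gr\"onwall recursion over the partition.
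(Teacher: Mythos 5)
Your proof is correct, and its heart---the extremal--shift cancellation obtained from the Isaacs condition (\ref{isaacs}) together with the Lipschitz constant $L$ of $y\mapsto yQ(t,y,u,v)$, yielding $\beta=2L$---is exactly the paper's argument. Where you genuinely differ is the machinery that reduces the stochastic expectation to this deterministic comparison. The paper expands $\mathbb{E}^h_{t_*x_*}\|X^h(t_+)-w_+\|^2$ as a sum over states weighted by one-step transition probabilities and invokes Lemma \ref{lm_trans_prob} (proved separately in Section \ref{sect_prop}) to estimate $\sum_z\|z-x_*\|^2p(\cdots)$ and $\sum_z\langle x_*-w_*,z-x_*\rangle p(\cdots)$, the latter collapsed to $\langle x_*-w_*,x_*\widehat{Q}\rangle$ via the Kolmogorov property. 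You instead apply Dynkin's formula (\ref{dynkin}) to $f(x)=\|x-w_+\|^2$ and compute the generator exactly, $L^h_\tau f(x)=2\langle x-w_+,xQ\rangle+2h\sum_{i\neq j}x_iQ_{ij}$---the same algebra, but packaged as an identity rather than as inequalities on transition probabilities. This buys transparency: the drift term and the jump-size term $Ch(t_+-t_*)$ (your $C=2(d-1)K$ versus the paper's $2d^2K$; both are admissible constants) appear immediately, and Lemma \ref{lm_trans_prob} is never needed. The price is that your freezing step requires the a priori moment bound $\mathbb{E}^h_{t_*x_*}\|\mathcal{X}(\tau)-x_*\|^2\leq C'\bigl(h(\tau-t_*)+(\tau-t_*)^2\bigr)$, which you must justify by a second application of Dynkin's formula plus a comparison (Bihari/Gr\"onwall-type) argument; note that even the cruder linear-in-$(\tau-t_*)$ bound would do, since the resulting error is $O\bigl((t_+-t_*)^{3/2}\bigr)$ and hence absorbable into $\varkappa^h(t_+-t_*)\cdot(t_+-t_*)$ for fixed $h$. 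Your treatment of the guide---averaging the differential inclusion into $\bar b\in\mathrm{co}\{w_*Q(t_*,w_*,u,v_*):u\in U\}$ with an error of order $(t_+-t_*)^2+\gamma(t_+-t_*)\cdot(t_+-t_*)$---mirrors the paper's use of the measures $\mu_t$ and the modulus $\varrho$ of (\ref{rho_def}). Finally, you are right that the right-hand side must carry $\|x_*-w_*\|^2$ rather than $\|x_*-w_*\|$: that is a typo in the lemma's statement, and both your argument and the paper's own proof (see (\ref{expec_stima_pre})--(\ref{extr_shift_fin}) and the use of the lemma in (\ref{ineq_r})) establish the squared version.
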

\begin{proof}
Denote the $i$-th component of vector $x_*$ by $x_{*i}$.

We have that
\begin{equation}\label{expectation}
\mathbb{E}_{t_*x_*}^h(\|\mathcal{X}(t_+,t_*,x_*,u_*,v)-w_+\|^2)=\sum_{z\in\Sigma_d^h}\|z-w_+\|^2p(t_*,x_*,t_+,z,u_*,v).
\end{equation}
Further,
\begin{equation*}\begin{split}\|z-w_+\|^2=\|(z-x_*)&+(x_*-w_*)+(w_*-w_+)\|^2\\
=\|x_*-w_*&\|^2+2\langle x_*-w_*,z-x_*\rangle\\-2\langle &x_*-w_*,w_+-w_*\rangle +\|z-x_*\|^2+\|w_+-w_*\|^2.
 \end{split}\end{equation*}
It follows from (\ref{sigma_d_property}) that
 \begin{equation}\label{w_estimate}
\left\|\frac{d}{dt}\zeta_1(t_+,t,t_*,w_*,v_*)\right\|\leq K\sqrt{d},\ \  \|w_+-w_*\|^2\leq K^2d(t_+-t_*)^2.
 \end{equation}
From Lemma \ref{lm_trans_prob} it follows that
\begin{equation}\label{z_estima}\begin{split}
  \sum_{z\in\Sigma_d^h}\|z-x_*\|^2p(t_*,x_*,t_+,z,u_*,v&)\\\leq \sum_{i=1}^d\sum_{j\neq i}\|-he^i+he^j&\|^2\frac{1}{h}\int_{t_*}^{t_+}\int_VQ_{ij}(t_*,x_*,u_*,v)\nu_\tau(dv)d\tau\\ &+2d^3\alpha^h(t_+-t_*)\cdot(t_+-t_*) \\
  \leq 2hd^2K(t_+-t_*)+2&d^3\alpha^h(t_+-t_*)\cdot(t_+-t_*).
\end{split}\end{equation}
For simplicity denote $\zeta_*(t)=\zeta_1(t,t_+,t_*,w_*,u,v_*)$. We have that for each $t$ there exists a probability $\mu_t$ on $V$ such that
$$\frac{d\zeta_*}{dt}(t)=\int_{u\in U}\zeta_*(t)Q(t,\zeta_*(t),u,v_*)\mu_t(du). $$ Therefore,
\begin{equation}\label{inner_transform}\begin{split}
    \sum_{z\in\Sigma_d^h}\langle x_*-w_*,&w_+-w_*\rangle p(t_*,x_*,t_+,z,u_*,v)\\=\Bigl\langle x_*-&w_*, \int_{t_*}^{t_+}\int_{u\in U}\zeta_*(t)Q(t,\zeta_*(t),u,v_*)\mu_t(du)dt\Bigr\rangle.
\end{split}\end{equation}

Define
\begin{equation}\label{rho_def}\begin{split}\varrho(\delta&)\triangleq\sup\{|y''Q(t'',y'',u,v)-y'Q(t',y',u,v)|:\\ &t',t''\in [0,T], \ \ y',y''\in \Sigma_d,\ \ u\in U,\ \ v\in V,\ \ |t'-t''|\leq \delta,\ \ \|y'-y''\|\leq \delta K\sqrt{d}\}. \end{split}\end{equation} We have that $\varrho(\delta)\rightarrow 0$, as $\delta\rightarrow 0$.
From (\ref{w_estimate}), (\ref{inner_transform}), and (\ref{rho_def}) it follows that
\begin{equation}\label{cross_estima_guide}\begin{split}
  &\sum_{z\in\Sigma_d^h}\langle x_*-w_*,w_+-w_*\rangle p(t_*,x_*,t_+,z,u_*,v)\\
  &\geq \left\langle x_*-w_*,\int_{t_*}^{t_+}\int_{u\in U}w_*Q(t_*,w_*,u,v_*)\mu_t(du)dt\right\rangle-\sqrt{2d}\varrho(t_+-t_*)\cdot(t_+-t_*).
\end{split}\end{equation}
Using  Lemma \ref{lm_trans_prob} one more time we get the inequality
\begin{equation}\label{cross_estima_state_pre}\begin{split}
  \sum_{z\in\Sigma_d^h}\langle x_*-w_*,z-x_*\rangle p(t_*,x_*,t_+,z,u_*,v&)\\ \leq
  \sum_{i=1}^d\sum_{j\neq i}\langle x_*-w_*,-he^i+he^j&\rangle\frac{1}{h}\int_{t_*}^{t_+}\int_V x_{*i}Q_{ij}(t_*,x_*,u_*,v)\nu_t(dv)dt\\&+2d^3\alpha^h(t_+-t_*)\cdot(t_+-t_*).
\end{split}\end{equation}
The first term in the right-hand side of (\ref{cross_estima_state_pre}) can be transformed as follows. Denote for simplicity
$$\widehat{Q}_{ij}=\int_{t_*}^{t_+}\int_V Q_{ij}(t_*,x_*,u_*,v)\nu_t(dv)dt .$$ Note that $\widehat{Q}=(\widehat{Q}_{ij})_{i,j=1}^d$ is a Kolmogorov matrix. That means that $$-\sum_{j\neq i}\widehat{Q}_{ij}=\widehat{Q}_{ii}. $$ We have that
\begin{equation*}\begin{split}
  \sum_{i=1}^d\sum_{j\neq i}(-he^i+he^j&)\frac{1}{h}\int_{t_*}^{t_+}\int _V x_{*i}Q_{ij}(t_*,x_*,u_*,v)\nu_t(dv)dt \\
  &=\sum_{i=1}^d\sum_{j\neq i}e^j x_{*,i}\widehat{Q}_{ij}-\sum_{i=1}^d x_{*,i}e^i\sum_{j\neq i} \widehat{Q}_{ij}=\\
  &=\sum_{i=1}^d \sum_{j=1}^d e^j x_{*,i}\widehat{Q}_{ij}=\sum_{j=1}^d\left[\sum_{i=1}^d x_{*,i}\widehat{Q}_{ij}\right]e^j=x_*\widehat{Q}.
\end{split}\end{equation*}
This and (\ref{cross_estima_state_pre}) yield the estimate
\begin{equation}\label{cross_estima_state_fin}\begin{split}
  \sum_{z\in\Sigma_d^h}&\langle x_*-w_*,z-x_*\rangle p(t_*,x_*,t_+,z,u_*,v)\\ \leq
&\left\langle x_*-w_*,\int_{t_*}^{t_+}\int_V x_*Q(t_*,x_*,u_*,v)\nu_t(dv)dt\right\rangle +2d^3\alpha^h(t_+-t_*)\cdot(t_+-t_*).
\end{split}\end{equation}
Substituting (\ref{w_estimate})--(\ref{cross_estima_guide}), (\ref{cross_estima_state_fin}) in (\ref{expectation}) we get the estimate
\begin{equation}\label{expec_stima_pre}\begin{split}
  \mathbb{E}_{t_*x_*}^h(\|\mathcal{X}(t_+,&t_*,x_*,u_*,v)-w_+\|^2)\leq  \|x_*-w_*\|^2\\
  &+ 2\left\langle x_*-w_*,\int_{t_*}^{t_+}\int_V x_*Q(t_*,x_*,u_*,v)\nu_t(dv)dt\right\rangle\\
  &-2\left\langle x_*-w_*,\int_{t_*}^{t_+}\int_{u\in U} w_*Q(t_*,w_*,u,v_*)\mu_t(du)dt\right\rangle\\
  &+2Kd^2 h(t_+-t_*)+ (6d^3\alpha^h(t_+-t_*)+\sqrt{2d}\varrho(t_+-t_*))\cdot(t_+-t_*).
\end{split}\end{equation}

Let $L$ be a Lipschitz constant of the function $y\mapsto yQ(t,y,u,v)$ i.e. for all $y',y''\in\Sigma_d$, $t\in [0,T]$, $u\in U$, $v\in Q$
$$\|y'Q(t,y',u,v)-y''Q(t,y'',u,v)\|\leq L\|y'-y''\|. $$

 We have that
\begin{equation*}\begin{split}
  2\Bigl\langle x_*-w_*,\int_{t_*}^{t_+}&\int_V x_*Q(t_*,x_*,u_*,v)\nu_t(dv)dt\Bigr\rangle\\&-2\Bigl\langle x_*-w_*,\int_{t_*}^{t_+}\int_{u\in U} w_*Q(t_*,w_*,u,v_*)\mu_t(du)dt\Bigr\rangle \\ \leq
  2\int_{t_*}^{t_+}\int_{u\in U}&\int_{v\in V}\Bigl [\Bigl\langle x_*-w_*, x_*Q(t_*,x_*,u_*,v)\Bigr\rangle\\-\Bigl\langle x_*-&w_*, x_*Q(t_*,x_*,u,v_*)\Bigr\rangle\Bigr]\nu_t(dv)\mu_t(du)dt\\&+2L\|x_*-w_*\|^2(t_+-t_*).
\end{split}\end{equation*}
The choice of $u_*$ and $v_*$ gives that for all $u\in U$ and $v\in V$
$$\langle x_*-w_*, x_*Q(t_*,x_*,u_*,v)\rangle\leq \langle x_*-w_*, x_*Q(t_*,x_*,u,v_*)\rangle. $$ Consequently, we get the estimate
\begin{equation}\label{extr_shift_fin}\begin{split}
  2\Bigl\langle x_*-w_*,\int_{t_*}^{t_+}\int_V x_*Q(t_*,x_*,u_*,v)\nu_t&(dv)dt\Bigr\rangle\\-2\Bigl\langle x_*-w_*,\int_{t_*}^{t_+}\int_{u\in U} w_*Q(t_*,w_*,&u,v_*)\mu_t(du)dt\Bigr\rangle \\
  &\leq 2L\|x_*-w_*\|^2(t_+-t_*).
\end{split}\end{equation} From (\ref{expec_stima_pre}) and (\ref{extr_shift_fin}) the conclusion of the Lemma follows for
$$\beta=2L,\ \ C=2d^2K, \ \ \varkappa^h(\delta)=6d^3\alpha^h(\delta)+\sqrt{2d}\varrho(\delta). $$\end{proof}

\section{Near Optimal Strategies}\label{sect_proof}
In this section we prove Theorem \ref{th_kras_sub_prob} and Corollary \ref{coll_second}.
\begin{proof}[Proof of Theorem \ref{th_kras_sub_prob}]
Let $v\in\mathcal{V}^h[s]$ be a control of the second player. Consider a partition $\Delta=\{t_k\}_{k=1}^m$ of the time interval $[s,T]$. If $x_0,x_1,\ldots,x_m$ are  vectors, $x_0=y$ then denote by $\hat{p}^h_r(x_1,\ldots,x_r,\Delta)$ the probability of the event $\mathcal{X}_1^h[t_k,s,y,\hat{\mathfrak{u}},\Delta,v]=x_k$ for $k=\overline{1,r}$.
Define vectors $w_0,\ldots,w_m$ recursively in the following way. Put
 \begin{equation}\label{w_0_def}
 w_0\triangleq\hat{\chi}_1(s,y)=y,
\end{equation}
for $k>0$ put
  \begin{equation}\label{w_k_def}
  w_k\triangleq\hat{\psi}_1(t_k,t_{k-1},x_{k-1},w_{k-1}).
\end{equation} If $w_0,\ldots,w_m$ are defined by rules (\ref{w_0_def}), (\ref{w_k_def}) and $r\in \overline{1,n}$ we write $$(w_0,\ldots,w_r)=g_r(x_0,\ldots,x_{r-1},\Delta). $$ In addition, put $g_0(\Delta)\triangleq y$.

Below we use the transformation $G(\cdot,\mathcal{X}^h_1[\cdot,s,y,\hat{\mathfrak{u}},\Delta,v])$ of the stochastic $\mathcal{X}^h_1[\cdot,s,y,\hat{\mathfrak{u}},\Delta,v]$ defined in the following way. If $x_i$ are values of $\mathcal{X}^h_1[t_i,s,y,\hat{\mathfrak{u}},\Delta,v]$, $i=0,\ldots,r$, and $(w_0,\ldots,w_r)=g_r(x_0,\ldots,x_{r-1},\Delta)$, then we put $$G(t_r,\mathcal{X}^h_1[\cdot,s,y,\hat{\mathfrak{u}},\Delta,v])\triangleq w_r. $$ Generally, the stochastic process $G(\cdot,\mathcal{X}^h_1[\cdot,s,y,\hat{\mathfrak{u}},\Delta,v])$ is non-Markov.

Further, if $u_i=\hat{u}(t_i,x_i,w_i)$, $i=0,\ldots,r$, and $(w_0,\ldots,w_r)=g_r(x_0,\ldots,x_{r-1},\Delta)$, we write $\varsigma_r(x_0,\ldots,x_r,\Delta)\triangleq u_r$.

We have that for any $r\in \overline{1,m}$
\begin{equation}\label{first_expectation_estima}\begin{split}
               \mathbb{E}_{sy}^h(\|\mathcal{X}^h_1[t_r,s,y,\hat{\mathfrak{u}},\Delta,&v]- G(t_r,\mathcal{X}^h_1[\cdot,s,y,\hat{\mathfrak{u}},\Delta,v])\|^2) \\=
               \sum_{x_1,\ldots,x_r}\|x_r-g_r(x_0,&\ldots,x_{r-1},\Delta)\|^2\hat{p}_r(x_0,\ldots,x_{r},\Delta)\\=
               \sum_{x_1,\ldots,x_{r-1}}\hat{p}_{r-1}(x_0,&\ldots,x_{r-1},\Delta) \cdot\sum_{x_r}\|x_r-g_r(x_0,\ldots,x_{r-1},\Delta)\|^2 \\\cdot &P_{t_{r-1}x_{r-1}}^h(X(t_r,t_{r-1},x_{r-1},\varsigma_{r-1}(x_0,\ldots,x_{r-1}),v)=x_r).
\end{split}\end{equation}
By Lemma \ref{lm_kras_subb} we have that
\begin{equation*}\begin{split}
  \sum_{x_r}\|x_r-g_r(x_1,\ldots,x_{r-1},&\Delta))\|^2 \\\cdot P_{t_{r-1}x_{r-1}}^h(X(t_r,&t_{r-1},x_{r-1},\varsigma_{r-1}(x_0,\ldots,x_{r-1}),v)=x_r) \\
  \leq (1+\beta(t_{r}-t_{r-1}))\|x_{r-1}&-g_{r-1}(x_0,\ldots,x_{r-2},\Delta)\|^2\\ +Ch\cdot(t_r-&t_{r-1})+\varkappa^h(t_r-t_{r-1})\cdot(t_r-t_{r-1}).
\end{split}\end{equation*}
From this and (\ref{first_expectation_estima}) it follows that
\begin{equation}\label{ineq_r}\begin{split}
               \mathbb{E}_{sy}^h(\|\mathcal{X}^h_1[t_r,s,y,\hat{\mathfrak{u}},&\Delta,v]- G(t_r,\mathcal{X}^h_1[\cdot,s,y,\hat{\mathfrak{u}},\Delta,v])\|^2)\\ \leq
               (1+&\beta(t_{r}-t_{r-1}))\mathbb{E}_{sy}^h(\|x_{r-1}-g_{r-1}(x_0,\ldots,x_{r-2})\|^2)\\&+ Ch\cdot(t_r-t_{r-1})+\varkappa^h(t_r-t_{r-1})\cdot(t_r-t_{r-1}).
               \end{split}\end{equation}
Applying this inequality recursively we get
\begin{equation*}\begin{split}
               \mathbb{E}_{sy}^h(\|\mathcal{X}^h_1[\cdot,s,y,\hat{\mathfrak{u}},\Delta,v&]- G(T,\mathcal{X}^h_1[\cdot,s,y,\hat{\mathfrak{u}},\Delta,v])\|^2)\\ \leq
               \exp(&\beta (T-s))\mathbb{E}_{sy}^h(\|x_0-g_0(\Delta)\|^2)\\&+Ch\cdot (T-s)+\varkappa^h(d(\Delta))\cdot(T-s).
               \end{split}\end{equation*}
Taking into account the equality $x_0=y=g_0(\Delta)$ we conclude that
\begin{equation}\label{final_estima}
               \mathbb{E}_{sy}^h(\|\mathcal{X}^h_1[T,s,y,\hat{\mathfrak{u}},\Delta,v]- G(T,\mathcal{X}^h_1[\cdot,s,y,\hat{\mathfrak{u}},\Delta,v])\|^2)\\ \leq \epsilon(h,d(\Delta)).
               \end{equation}
Here we denote $$\epsilon(h,\delta)\triangleq Dh+T\varkappa^h(\delta),\ \ D\triangleq CT.$$ Note that for any $h$
\begin{equation}\label{epsilon_convergence}
\epsilon(h,\delta)\rightarrow Dh,\mbox{ as } \delta\rightarrow 0.
\end{equation}
From (\ref{final_estima}) and Jensen's inequality we get
\begin{equation}\label{expect_estima}
\mathbb{E}_{sy}^h(\|\mathcal{X}^h_1[T,s,y,\hat{\mathfrak{u}},\Delta,v]- G(T,\mathcal{X}^h_1[\cdot,s,y,\hat{\mathfrak{u}},\Delta,v])\|)\leq\sqrt{\epsilon(d(\Delta),h)}.
\end{equation}

By construction of control with guide strategy $\hat{\mathfrak{u}}$
\begin{equation*}\begin{split}\varphi(s,y)=\varphi(t_0,g_0(\Delta))&\geq\varphi(t_1,g_1(x_0,\Delta)) \geq\ldots\\&\geq\varphi(t_m,g_m(x_0,\ldots,x_{m0-1},\Delta))=\sigma(g_m(x_0,\ldots,x_{m0-1},\Delta)). \end{split}\end{equation*}
Hence,
 \begin{equation}\label{sigma_varphi}
 \sigma(G(T,\mathcal{X}^h_1[\cdot,s,y,\hat{\mathfrak{u}},\Delta,v]))\leq \varphi(s,y).
\end{equation}

Since $\sigma$ is Lipschitz continuous with the constant $R$ we have that for any partition $\Delta$ and second player's control $v$
$$\sigma(\mathcal{X}^h_1[T,s,y,\hat{\mathfrak{u}},\Delta,v])\leq \varphi(s,y)+R\|\mathcal{X}^h_1[T,s,y,\hat{\mathfrak{u}},\Delta,v]- G(T,\mathcal{X}^h_1[\cdot,s,y,\hat{\mathfrak{u}},\Delta,v])\|. $$
This and (\ref{expect_estima}) give the inequality
$$\mathbb{E}_{sy}^h\sigma(\mathcal{X}^h_1[T,s,y,\hat{\mathfrak{u}},\Delta,v])\leq \varphi(s,y)+R\sqrt{\epsilon(d(\Delta),h)}. $$
Passing to the limit as $d(\Delta)\rightarrow 0$ and taking into account the property $\epsilon(\delta,h)\rightarrow Dh$, as $\delta\rightarrow 0$, (see \ref{epsilon_convergence}) we obtain the first statement of the Theorem.

Now let us prove the second statement of the Theorem.
Using Markov inequality and (\ref{final_estima}) we get
\begin{equation*}\begin{split}P\bigl(\|&\mathcal{X}^h_1[T,s,y,\hat{\mathfrak{u}},\Delta,v]- G(T,\mathcal{X}^h_1[\cdot,s,y,\hat{\mathfrak{u}},\Delta,v])\|\geq[\epsilon(h,d(\Delta))]^{1/3}\bigr)\\ &=P\bigl(\|\mathcal{X}^h_1[T,s,y,\hat{\mathfrak{u}},\Delta,v]- G(T,\mathcal{X}^h_1[\cdot,s,y,\hat{\mathfrak{u}},\Delta,v])\|^2\geq [\epsilon(h,d(\Delta))]^{2/3}\bigr)\\ &\leq
\frac{\mathbb{E}_{sy}^h(\|\mathcal{X}^h_1[T,s,y,\hat{\mathfrak{u}},\Delta,v]- G(T,\mathcal{X}^h_1[\cdot,s,y,\hat{\mathfrak{u}},\Delta,v])\|^2)}{[\epsilon(h,d(\Delta))]^{2/3}}
\leq \sqrt[3]{\epsilon(h,d(\Delta))}. \end{split}\end{equation*}
Lipschitz continuity of the function $\sigma$ and (\ref{sigma_varphi}) yield the following inclusion
\begin{equation*}\begin{split}
  \{\sigma(\mathcal{X}_1^h[T,s,y,\hat{\mathfrak{u}},\Delta,v]&)\geq\varphi(s,y)+R[\epsilon(h,d(\Delta))]^{1/3}\}\subset \\\bigl\{\|\mathcal{X}^h_1[T,s,&y,\hat{\mathfrak{u}},\Delta,v]- G(T,\mathcal{X}^h_1[\cdot,s,y,\hat{\mathfrak{u}},\Delta,v])\|\geq[\epsilon(h,d(\Delta))]^{1/3}\bigr\}.
\end{split}\end{equation*}
Finally, for any partition $\Delta$ and any second player's control $v\in\mathcal{V}^h[s]$ we have that
$$P\{\sigma(\mathcal{X}_1^h[T,s,y,\hat{\mathfrak{u}},\Delta,v])\geq\varphi(s,y)+ R[\epsilon(h,d(\Delta))]^{1/3}\}\leq [\epsilon(h,d(\Delta))]^{1/3}. $$
From this the second statement of the Theorem follows.

\end{proof}
To prove  Corollary \ref{coll_second} it suffices to replace the payoff function with $-\sigma$ and interchange the players.

\section{Conclusion}
In the paper we applied the deterministic strategy that is optimal for deterministic zero-sum game to the Markov game describing interacting particle system. We showed that it is near optimal. We considered control with guide strategy. This strategy requires computer to storage and compute a finite dimensional vector that is an evaluation of the current position. The question  whether there exists  an optimal for differential game feedback deterministic strategy that is near optimal for Markov game is open.

We restricted our attention to the Markov game describing the interacting particle systems. The extensions of  the results of the paper to the general case is the theme of future works.

The author would like to thank Vassili Kolokoltsov for insightful discussions.

\end{document}